\newtheorem{theorem}{Theorem}[section]
\newtheorem{conjecture}[theorem]{Conjecture}
\newtheorem{corollary}[theorem]{Corollary}
\newtheorem{lemma}[theorem]{Lemma}
\newtheorem{proposition}[theorem]{Proposition}
\theoremstyle{remark}
\newcommand{\cgE}{\mathcal{E}}
\newcommand{\cgH}{\mathcal{H}}
\newcommand{\cgR}{\mathcal{R}}
\newcommand{\KriNes}{K\v{r}\'{i}\v{z} and Ne\v{s}et\v{r}il}
\newcommand{\NesRod}{Ne\v{s}et\v{r}il and R\"{o}dl}
\newcommand{\eye}{\operatorname{eye}}
\newcommand{\girth}{\operatorname{girth}}
\begin{document}

\title[POSETS AND GRAPHS]{Incidence Posets and Cover Graphs}

\author[W.~T.~TROTTER]{William T. Trotter}

\address{School of Mathematics\\
  Georgia Institute of Technology\\
  Atlanta, Georgia 30332\\
  U.S.A.}

\email{trotter@math.gatech.edu}

\author[R.~WANG]{Ruidong Wang}

\address{School of Mathematics\\
  Georgia Institute of Technology\\
  Atlanta, Georgia 30332\\
  U.S.A.}

\email{rwang49@math.gatech.edu}

\date{August 5, 2013}

\subjclass[2010]{06A07, 05C35}

\keywords{Chromatic number, incidence poset, dimension}

\begin{abstract}
We prove two theorems concerning incidence posets
of graphs, cover graphs of posets and a related
graph parameter.  First, answering
a question of Haxell, we show that the chromatic number of a 
graph is not bounded in terms of the dimension of its 
incidence poset, provided the dimension is at least four.
Second, answering a question of \KriNes, we show that
there are graphs with large girth and large
chromatic number among the class of graphs having
eye parameter at most two.  
\end{abstract}

\maketitle

\section{Introduction}

The \textit{chromatic number} of a graph $G=(V,E)$, denoted $\chi(G)$, is
the least positive integer $r$ for which there is a partition
$V=V_1\cup V_2\cup\dots\cup V_r$ of the vertex set $V$ of $G$
so that $V_i$ is an independent set in $G$, for each $i=1,2,\dots,r$.
A family $\cgR=\{L_1,L_2,\dots,L_t\}$ of linear extensions of a poset
$P$ is a \textit{realizer} of $P$ if $P=\cap\cgR$, i.e.,
$x<y$ in $P$ if and only if $x<y$ in $L_i$ for each $i=1,2,\dots,t$.
The \textit{dimension} of a poset $P$, denoted $\dim(P)$, is
the minimum size of a realizer of $P$.

When $G=(V,E)$ is a graph, the \textit{incidence poset} of $G$,
denoted $P_G$, has $V\cup E$ as its ground set; vertices in $V$ are minimal
elements of $P_G$; edges in $E$ are maximal elements of $P_G$; and a vertex
$x$ is less than an edge $e$ in $P_G$ when $x$ is one of the two endpoints
of $e$.  Alternatively, considering the edges of a graph as two element
subsets of the vertex set, the incidence poset $P_G$ of a graph $G=(V,E)$ 
is just the set $V\cup E$ partially ordered by inclusion.

When $x$ and $y$ are points in a poset $P$ with $x>y$ in $P$, we
say $x$ \textit{covers} $y$ in $P$ when there is no point $z$ with
$x>z>y$ in $P$. Also, we let $G_P$ denote the \textit{cover} graph of $P$.
The graph $G_P$ has the elements of $P$ as vertices with $\{x,y\}$ an
edge in $G_P$ when one of $x$ and $y$ covers the other in $P$.
The \textit{diagram} of $P$ (also called a \textit{Hasse} diagram or 
\textit{order} diagram) is a drawing of the cover graph of $G$ in 
the plane with $x$ higher in the plane
than $y$ whenever $x$ covers $y$ in $P$.

\subsection{Mathematical Preliminaries}

We will find it convenient to work with the following alternative
definition of dimension of an incidence poset, as proposed by
Barrera-Cruz and Haxell~\cite{b:BarHax}, 

\begin{proposition}\label{p:alternate}
Let $G=(V,E)$ be a graph and let $P_G$ be its incidence poset.  Then 
$\dim(P_G)$ is the least positive integer $t$ for which there is
a family $\{L_1,L_2,\dots,L_t\}$ of linear orders on $V$ so that the
following two conditions are satisfied:
\begin{enumerate}
\item If $x$, $y$ and $z$ are distinct vertices of $G$ and
$\{y,z\}$ is an edge in $G$, then there is some $i$ with $1\le i\le t$
for which both $x>y$ and $x>z$ in $L_i$.
\item If $x$ and $y$ are distinct vertices of $G$, then there is
some $i$ with $1\le i\le t$ so that $x>y$ in $L_i$.
\end{enumerate}
\end{proposition}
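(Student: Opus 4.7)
The strategy is to establish both inequalities directly by passing between realizers of $P_G$ and families of linear orders on $V$.

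For the easy direction, given a realizer $\{\tilde L_1,\dots,\tilde L_t\}$ of $P_G$, I would set $L_i := \tilde L_i|_V$. Condition~(2) is immediate, since any two distinct vertices are incomparable in $P_G$ and hence reversed in some $\tilde L_i$. Condition~(1) follows by observing that if $x$ is a vertex and $e=\{y,z\}$ is an edge with $x\notin e$, then $x$ and $e$ are incomparable in $P_G$, so some $\tilde L_i$ has $x>e$; since $y,z<e$ in any linear extension of $P_G$, this forces $x>y$ and $x>z$ in $\tilde L_i$, and therefore in $L_i$.

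For the converse, given a family $\{L_1,\dots,L_t\}$ of linear orders on $V$ satisfying (1) and (2), I would build a linear extension $\tilde L_i$ of $P_G$ from $L_i$ by keeping the vertex order as in $L_i$ and then inserting each edge $e=\{y,z\}$ immediately after its upper endpoint $\max_{L_i}(y,z)$ (breaking ties among edges sharing an upper endpoint arbitrarily). The key consequence of this insertion rule is that for any vertex $v$ with $v>y$ and $v>z$ in $L_i$, one has $e<v$ in $\tilde L_i$. I would then verify that $\{\tilde L_1,\dots,\tilde L_t\}$ reverses every incomparable pair of $P_G$: vertex-vertex pairs by~(2); for a vertex-edge pair $(x,e)$ with $x\notin e$, the direction ``$x>e$'' by (1) combined with the insertion rule, and the direction ``$e>x$'' from (2) applied to $x$ and either endpoint of $e$.

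The main obstacle is the edge-edge case, which I would split into two subcases. If $e=\{y_1,y_2\}$ and $f=\{z_1,z_2\}$ are vertex-disjoint, I apply~(1) to the vertex $z_1$ and the edge $e$ to obtain an $L_i$ with $z_1>y_1,y_2$; the insertion rule then gives $e<z_1<f$ in $\tilde L_i$. If instead $e\cap f=\{v\}$ with $e=\{v,y\}$ and $f=\{v,z\}$, then $y,v,z$ are distinct and~(1) applied to $y$ and $f$ yields an $L_i$ with $y>v,z$, whence $f<y<e$ in $\tilde L_i$. The symmetric reversals follow by swapping the roles of $e$ and $f$. Combining the two directions yields $\dim(P_G)=\min\{t:(1),(2)\text{ hold}\}$, which is the proposition.
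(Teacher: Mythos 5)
The paper does not actually prove Proposition~\ref{p:alternate}; it is stated as an alternative formulation due to Barrera-Cruz and Haxell~\cite{b:BarHax} and used without proof. So there is no in-paper argument to compare against, and your proposal must stand on its own. It does: the forward direction (restricting a realizer of $P_G$ to $V$) is routine, and your insertion rule for the converse --- placing each edge $e=\{y,z\}$ immediately above $\max_{L_i}(y,z)$ --- is exactly the right device, since it yields the invariant that $e < v$ in $\tilde L_i$ whenever $v$ dominates both endpoints of $e$ in $L_i$, which is what conditions (1) and (2) let you arrange. Your case analysis for reversing incomparable pairs is complete: vertex-vertex from (2); vertex-edge with $x>e$ from (1) and $e>x$ from (2) via an endpoint of $e$; and the two edge-edge subcases (disjoint, or sharing a vertex) each handled by applying (1) to a vertex of one edge against the other edge. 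In the shared-vertex subcase $e=\{v,y\}$, $f=\{v,z\}$, applying (1) to $y$ against $f$ gives $y>v$ and $y>z$, so $e$'s upper endpoint becomes $y$ while $f$'s is strictly below $y$, giving $f<y<e$ as you claim. The argument is correct and self-contained.
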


We note that some authors (see Ho\c{s}ten and Morris~\cite{b:HosMor},
for example) have worked with a notion of \textit{dimension} of a graph, which
uses only the first of the two conditions listed in Proposition~\ref{p:alternate}.
This parameter is at most the dimension of the associated incidence poset.
However, it can be at most one less.  Furthermore, for connected graphs 
with no vertices of degree one, the two parameters agree.

In~\cite{b:KriNes}, \KriNes\ defined a new parameter for
graphs, which we call the \textit{eye parameter}.
Formally, the eye parameter of a graph $G$, denoted $\eye(G)$, 
is the least positive integer $s$ for which there exists a family 
$\{L_1,L_2,\dots,L_s\}$ of linear orders on the
vertex set of $G$ for which if $x$, $y$ and $z$ are three distinct vertices
of $G$ with $\{y,z\}$ an edge of $G$, then there is some $i$ with
$1\le i\le s$ for which $x$ is not between $y$ and $z$ in $L_i$.  In this
definition, it is allowed that $x$ be \textit{above} both $y$ and $z$ or
\textit{below} both $y$ and $z$.  For example, when $G$ is a path, $\eye(G)=1$.
The following elementary proposition is stated for emphasis.

\begin{proposition}\label{p:eye-dim}
Let $G$ be a graph and let $P_G$ be the incidence poset
of $G$. Then $\eye(G)\le \dim(P_G)\le 2\eye(G)$.  
\end{proposition}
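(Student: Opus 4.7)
The plan is to prove both inequalities by direct manipulation of the linear orders, using the alternative characterization of $\dim(P_G)$ from Proposition~\ref{p:alternate}.

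For the left inequality $\eye(G)\le\dim(P_G)$, I would take a family $\{L_1,\dots,L_t\}$ of linear orders on $V$ that realizes the incidence poset in the sense of Proposition~\ref{p:alternate}, with $t=\dim(P_G)$, and argue it already witnesses the eye parameter. Indeed, given distinct vertices $x,y,z$ with $\{y,z\}$ an edge, condition~(1) of Proposition~\ref{p:alternate} supplies an index $i$ for which $x>y$ and $x>z$ in $L_i$. Then $x$ lies above both $y$ and $z$ in $L_i$, so in particular $x$ is not strictly between them. Hence the same family witnesses $\eye(G)\le t$.

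For the right inequality $\dim(P_G)\le 2\eye(G)$, I would start with a family $\{L_1,\dots,L_s\}$ of linear orders on $V$ realizing the eye parameter, $s=\eye(G)$, and form the doubled family $\cgR=\{L_1,L_1^*,L_2,L_2^*,\dots,L_s,L_s^*\}$, where $L_i^*$ denotes the reversal of $L_i$. I claim $\cgR$ satisfies both conditions of Proposition~\ref{p:alternate}. Condition~(2) is immediate, since for any two distinct vertices $x,y$, one of $L_1$ or $L_1^*$ places $x$ above $y$. For condition~(1), given $x,y,z$ with $\{y,z\}\in E$, the eye property yields an index $i$ such that $x$ is not strictly between $y$ and $z$ in $L_i$; thus $x$ is either above both $y$ and $z$ in $L_i$, in which case $L_i$ itself supplies the required order, or $x$ is below both, in which case $L_i^*$ places $x$ above both. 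Either way, some order in $\cgR$ satisfies condition~(1), and $|\cgR|=2s$, giving $\dim(P_G)\le 2\eye(G)$.

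Neither step presents a real obstacle; the argument is essentially a one-line translation between the ``not between'' formulation of the eye parameter and the ``above both'' formulation available for the incidence poset dimension, with the price of a factor of two paid by coupling each order with its reverse to recover the missing ``below both'' case.
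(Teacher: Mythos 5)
Your proof is correct. The paper itself offers no argument for Proposition~\ref{p:eye-dim}, labeling it ``elementary'' and leaving it to the reader; your two-step translation via Proposition~\ref{p:alternate} (condition~(1) directly yields an eye-witness for the left inequality, and doubling each order with its reversal recovers the ``below both'' case for the right inequality) is exactly the intended argument.
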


\section{Dimension and Chromatic Number}

In this section, we state our two main theorems, including just enough
background discussion to place them in context. Proofs are given in
the two sections immediately following.

To understand the fundamental importance of incidence posets and
dimension, we need only mention the following now classic theorem of 
Schnyder~\cite{b:Schn} (recently, a quite clever and very short proof 
has been provided by Barrera-Cruz and Haxell~\cite{b:BarHax}).

\begin{theorem}\label{t:Schnyder}
Let $G$ be a graph and let $P_G$ be its incidence poset.
Then $G$ is planar if and only if $\dim(P_G)\le 3$.
\end{theorem}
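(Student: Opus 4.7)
Both directions of this classical equivalence follow Schnyder's original approach, via \emph{Schnyder woods} on plane triangulations and a barycentric straight-line drawing.

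For the direction $G$ planar $\Rightarrow \dim(P_G)\le 3$, the plan is to first reduce to the case where $G$ is a plane triangulation. Any plane graph can be completed to a triangulation $G'$ on the same vertex set, and via Proposition~\ref{p:alternate} a realizer for $P_{G'}$ restricts to a realizer for $P_G$ of the same size, so it suffices to handle triangulations. Fix an outer face $a_1a_2a_3$ and construct a Schnyder wood: an orientation and 3-coloring of the inner edges such that each interior vertex has exactly one outgoing edge of each color and the cyclic arrangement around each interior vertex obeys the local Schnyder condition (the three outgoing edges separate the incoming edges of the other two colors). Existence follows by induction along a canonical ordering of the triangulation. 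From the wood, each interior vertex $v$ determines three monochromatic out-paths to $a_1,a_2,a_3$ that partition the inner faces into three regions with face counts $r_1(v),r_2(v),r_3(v)$. Sorting vertices by $r_i$, with tie-breaking consistent with the tree $T_i$, gives three linear orders $L_1,L_2,L_3$; the local Schnyder condition translates into the domination condition~(1) of Proposition~\ref{p:alternate}, while condition~(2) holds because $a_1,a_2,a_3$ sit at distinct extremes of the three orders.

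For the direction $\dim(P_G)\le 3\Rightarrow G$ planar, take $\{L_1,L_2,L_3\}$ from Proposition~\ref{p:alternate} and produce a straight-line plane drawing. Let $\rho_i(v)$ be the number of vertices strictly below $v$ in $L_i$, and place $v$ at the image of the triple $(\rho_1(v),\rho_2(v),\rho_3(v))$ under a fixed affine projection sending $(n-1,0,0)$, $(0,n-1,0)$, $(0,0,n-1)$ to the three corners of a large triangle $\Delta$. Draw the edges of $G$ as straight-line segments. Condition~(1) of Proposition~\ref{p:alternate} guarantees that for every edge $\{y,z\}$ and every third vertex $x$, some coordinate $\rho_i$ strictly dominates both $y$ and $z$, placing $x$ outside the relevant ``shadow'' region of the segment $yz$; a short convexity argument then rules out crossings and shows the drawing is plane.

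The main obstacle is the Schnyder wood step. Proving existence on an arbitrary plane triangulation requires a careful inductive argument along a canonical ordering, and showing that the three region-count orders actually satisfy condition~(1) requires translating the purely local Schnyder condition at each interior vertex into the global domination condition of Proposition~\ref{p:alternate}. The converse direction, once one accepts the barycentric placement, is primarily a geometric repackaging of the dominance structure encoded in the realizer, and should be routine in comparison.
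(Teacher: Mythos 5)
The paper does not contain a proof of Theorem~\ref{t:Schnyder}; it is stated as a classical result of Schnyder, with references to~\cite{b:Schn} for the original argument and to~\cite{b:BarHax} for a recent short proof, and then used as motivation for Questions~1 and~2. So there is no in-paper argument to compare your sketch against.

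That said, your outline does follow Schnyder's original route and is sound at the level of a sketch. For the forward direction, the reduction to triangulations via Proposition~\ref{p:alternate} works because the linear orders live on the unchanged vertex set, so any family witnessing the conditions for a triangulating supergraph $G'$ of $G$ automatically witnesses them for $G$; the Schnyder-wood construction along a canonical ordering and the passage from regions to linear orders is exactly Schnyder's argument (one should really phrase $L_i$ as \emph{any} linear extension of the region-containment partial order $<_i$ rather than literally sorting by the counts $r_i$, but since $u<_i v$ forces $r_i(u)<r_i(v)$ this is only a matter of wording). For the converse, I would push back a little on the closing remark that it ``should be routine in comparison.'' Turning an arbitrary $3$-realizer into a plane straight-line drawing is precisely the step that~\cite{b:BarHax} set out to streamline: the rank triples $(\rho_1(v),\rho_2(v),\rho_3(v))$ do not sum to a constant, so the picture is not literally barycentric, and the domination condition~(1) must be leveraged to exclude crossings between edges with four distinct endpoints, not merely to keep a third vertex off a drawn edge. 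That is where the real content of the ``short convexity argument'' lies, and if you were to write this out fully it would be the bulk of the proof rather than the easy half.
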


In view of Schnyder's result, it is natural to ask the following two questions:

\medskip
\noindent
\textbf{Question 1.}\quad 
Is the dimension of the incidence poset of a graph bounded
in terms of the chromatic number of the graph?

\medskip
\noindent
\textbf{Question 2.}\quad 
Is the chromatic number of a graph bounded in terms of the
dimension of its incidence poset?

\medskip
The first question was answered in the affirmative by
Agnarsson, Felsner and Trotter in~\cite{b:AgFeTr}, where the 
following asymptotic formula is proved.

\begin{theorem}\label{t:AgFeTr}
If $G$ is a graph, $P_G$ is the incidence poset of $G$ and
$\chi(G)=r$, then $\dim(P_G)=O(\lg\lg r)$.
\end{theorem}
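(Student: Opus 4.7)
The plan is to apply Proposition~\ref{p:alternate} and build, from a proper $r$-coloring of $G$, a small family of linear orders on $V(G)$ satisfying its two conditions. The key external input will be the existence of a small \emph{$3$-suitable} family of permutations of $[r]$: a family $\{\pi_1,\dots,\pi_t\}$ of permutations of $[r]$ such that, for every triple $\{a,b,c\}\subseteq[r]$ and every distinguished element of the triple, some $\pi_i$ lists that element first among the three. A classical result of Spencer provides such a family with $t=O(\log\log r)$, which is exactly where the doubly logarithmic bound in the theorem will come from.

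Given a proper coloring $c\colon V(G)\to[r]$ with color classes $V_1,\dots,V_r$, I would fix an arbitrary total order $<_a$ on each $V_a$ and, for each $i\in\{1,\dots,t\}$, define two linear orders $L_i^+,L_i^-$ on $V(G)$. In both, a vertex of color $a$ is declared above a vertex of color $b$ precisely when $\pi_i$ lists $a$ before $b$; within each color class $V_a$, order $L_i^+$ uses $<_a$ while $L_i^-$ uses the reverse. This produces $2t=O(\log\log r)$ linear orders, and the claim is that they witness $\dim(P_G)\le 2t$ through Proposition~\ref{p:alternate}.

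The verification reduces to a short case analysis on the colors appearing in condition~(1) for distinct vertices $x,y,z$ with $\{y,z\}$ an edge. If $c(x)$, $c(y)$, $c(z)$ are pairwise distinct, then $3$-suitability applied to $\{c(x),c(y),c(z)\}$ with $c(x)$ distinguished yields an index $i$ placing $c(x)$ first among the three, so $x$ sits above both $y$ and $z$ in $L_i^+$. If instead $c(x)=c(y)=a$ and $c(z)=b\ne a$ (the case $c(x)=c(z)$ is symmetric), then $3$-suitability still provides some $\pi_i$ listing $a$ before $b$; choosing $L_i^+$ or $L_i^-$ according to whether $x>_a y$ or $x<_a y$ places $x$ above $y$ within $V_a$ while keeping $x$ above $z$ via the coarse ordering. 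Condition~(2) is handled in the same way: colors are separated by any $\pi_i$ ordering them correctly, and same-color pairs are separated by a $L_i^+/L_i^-$ pair.

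The main obstacle is securing the $(r,3)$-suitable family of permutations of size $O(\log\log r)$; once this classical combinatorial input is in hand, everything else is a routine case analysis organized around the coloring. The doubly logarithmic growth in the theorem is inherited directly from the optimal size of such suitable families.
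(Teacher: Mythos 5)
Your proposal is correct and matches, in fully worked-out form, the argument the paper sketches after Theorem~\ref{t:AgFeTr}: the paper cites the bound to Agnarsson--Felsner--Trotter and reduces it to the formula $\dim(P_r)=\lg\lg r+(1/2+o(1))\lg\lg\lg r$ for the incidence poset of $K_r$, noting that $\dim(P_G)\le 2\dim(P_r)$ when $\chi(G)=r$. Observing (via Proposition~\ref{p:alternate}) that a realizer of $P_r$ is precisely a $3$-suitable/$2$-suitable family of permutations of the color set, and that each such permutation lifts to a pair of linear orders on $V(G)$ (one per direction within each color class) to handle same-color comparisons, is exactly the ``factor of $2$'' step the paper leaves implicit; your case analysis fills it in correctly.
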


The inequality in Theorem~\ref{t:AgFeTr} is best possible, up to
the value of the multiplicative constant,
since as noted in~\cite{b:AgFeTr} (and by others), the dimension of the
incidence poset $P_r$ of the complete graph $K_r$ is at least $\lg\lg r$,
a statement which follows easily from repeated application of the 
Erd\H{o}s/Szekeres theorem on monotonic sequences.  On the other hand,
Ho\c{s}ten and Morris~\cite{b:HosMor} showed
that it is possible to determine the exact value of the dimension of $P_r$
for surprisingly large values of $r$.  Furthermore, a relatively tight
asymptotic formula is known (see~\cite{b:AgFeTr}):
\[
\dim(P_r)=\lg\lg r+(1/2 +o(1))\lg\lg\lg r.
\]
It follows easily that if $P$ is the incidence poset of a graph
$G$ with $\chi(G)=r$, then
\[
\dim(P)\le 2 \lg\lg r+(1 +o(1))\lg\lg\lg r.
\]

In view of Schnyder's theorem, we know the answer to the
second question is yes, provided the dimension of the incidence
poset is at most three. But, in this paper, we show that in general the
answer to Question~2 is no, by proving the following theorem.

\begin{theorem}\label{t:main-1}
For every $r\ge1$, there exists a graph $G$ with $\chi(G)\ge r$ and
$\dim(P_G)\le 4$.
\end{theorem}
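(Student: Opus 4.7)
By Proposition~\ref{p:alternate}, the task is to produce, for each $r$, a graph $G$ with $\chi(G) \ge r$ together with four linear orders $L_1, L_2, L_3, L_4$ on $V(G)$ satisfying conditions~(1) and~(2). I would take $L_2 = L_1^{*}$ and $L_4 = L_3^{*}$, so condition~(2) is automatic. Normalising $V(G) = [n]$ and letting $L_1$ be the natural order on $[n]$, the pair $L_1, L_2$ already witnesses~(1) whenever the ``extra'' vertex $x$ lies outside the natural interval $[\min\{y,z\},\, \max\{y,z\}]$. Only the case $\min\{y,z\} < x < \max\{y,z\}$ remains, and when $L_3$ is encoded as a permutation $\tau$ of $[n]$, this residual case amounts to requiring
\[
\tau(x) \notin \bigl(\min\{\tau(y),\tau(z)\},\, \max\{\tau(y),\tau(z)\}\bigr)
\]
whenever $\min\{y,z\} < x < \max\{y,z\}$ and $\{y,z\} \in E(G)$. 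Plotting $\tau$ as the point set $\{(i,\tau(i)) : i \in [n]\}$ in the grid, this says exactly that the open axis-aligned rectangle spanned by the two endpoints of each edge contains no other plot-point; call such a pair \emph{box-empty}.

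The theorem therefore reduces to the following: for each $r$, find $n$ and a permutation $\tau$ of $[n]$ such that the ``box-empty'' graph $H_\tau$ on $[n]$ has chromatic number at least $r$; one then takes $G$ to be $H_\tau$ (or any subgraph realising its chromatic number). Equivalently, this is the question of whether $\eye(G)$ can be at most $2$ with $\chi(G)$ arbitrarily large, a reformulation consistent with Proposition~\ref{p:eye-dim}.

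Constructing such a permutation $\tau$ is the crux and the main obstacle. I would attempt an explicit iterated construction: starting from a small $\tau$ with $\chi(H_\tau) \ge 2$, replace each plot-point of the current permutation by a scaled copy of a smaller configuration, carefully interleaving the copies (reversing orientation between levels, or splitting each meta-vertex across several $y$-strata) so that the resulting empty-rectangle graph contains both a copy of the previous graph within each block and a transversal structure across blocks that forces a new colour. The delicate technical step is verifying that every targeted edge remains box-empty under the interleaving: a naive blow-up fails because within-block points block cross-block rectangles, so the pattern must be tuned precisely so that no stray plot-point lies inside any needed rectangle. Once a chromatic increment per iteration is secured, iterating drives $\chi(H_\tau)$ to infinity, completing the proof.
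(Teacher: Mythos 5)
Your reduction is correct and, in substance, identical to the paper's easy step: pairing $L_1$ with $L_1^{*}$ and $L_3$ with $L_3^{*}$ makes condition~(2) of Proposition~\ref{p:alternate} automatic, and the residual case of condition~(1) is precisely the eye-parameter condition, so Theorem~\ref{t:main-1} reduces to finding, for each $r$, a graph $G$ with $\eye(G)\le 2$ and $\chi(G)\ge r$. Your ``box-empty permutation graph'' $H_\tau$ is exactly the maximal graph on $[n]$ for which the natural order and the order given by $\tau$ witness $\eye\le 2$, so the reformulation is accurate. What you have rederived here is essentially the inequality $\dim(P_G)\le 2\eye(G)$ of Proposition~\ref{p:eye-dim} (equivalently the special case $\dim(P)=2$ of Lemma~\ref{l:referee}).

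The gap is that you then need the existence of such graphs, and that is the hard content. Your reduction target is exactly Corollary~\ref{c:KriNes}, a nontrivial theorem of K\v{r}\'{i}\v{z} and Ne\v{s}et\v{r}il which the paper simply cites (as Theorem~\ref{t:KriNes}) and then finishes with the short Lemma~\ref{l:referee}. You instead sketch a from-scratch iterated blow-up of permutations, but you never exhibit a concrete interleaving, never verify that the required edges stay box-empty, and never prove the chromatic increment per iteration. Indeed you flag this yourself --- ``a naive blow-up fails because within-block points block cross-block rectangles, so the pattern must be tuned precisely'' --- and that precise tuning is exactly the substance of the K\v{r}\'{i}\v{z}--Ne\v{s}et\v{r}il construction. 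As written, the argument establishes a clean equivalence but not the theorem; to close the gap you must either carry out and verify the permutation construction in full, or cite the K\v{r}\'{i}\v{z}--Ne\v{s}et\v{r}il result, after which the proof is three lines.
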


Of course, the inequality $\dim(P_G)\le4$ in Theorem~\ref{t:main-1} will become
tight once $r\ge5$.  

\subsection{Cover Graphs}

The cover graph of a poset is
a triangle-free graph, and some thirty years ago, Rival asked whether there
are cover graphs with large chromatic number.  Bollob\'{a}s noted in~\cite{b:Boll}
that B.~Descartes' classic proof~\cite{b:Desc} of the existence of 
triangle-free graphs with large chromatic number provided a positive answer.  
In fact, this construction shows that for each $r\ge1$, there is a poset 
$P$ of height $r$ so that the chromatic number of the cover graph of $P$ is 
$r$.  In view of the dual form of Dilworth's theorem, this is the minimum value of 
height for which such a poset can possibly exist.  

So with the fundamental question answered, several authors went on to prove
somewhat stronger results.  First, Bollob\'{a}s~\cite{b:Boll} showed that there
are lattices whose cover graphs have arbitrarily large chromatic number.
Second, the construction of \NesRod~\cite{b:NesRod}
for graphs (and hypergraphs) with large girth and large chromatic number also 
implies that for each pair $(g,r)$ of positive integers,
there is a poset  $P$  of height $r$ whose cover
graph $G_P$ has girth at least $g$ and chromatic number~$r$.

In another direction, as is well known, the shift graph $S_n$ consisting of all 
$2$-element subsets of $\{1,2,\dots,n\}$
with $\{i,j\}$ adjacent to $\{j,k\}$ when $1\le i <j < k\le n$ is a
cover graph of an interval order and has chromatic number $\lceil \lg n\rceil$.
In this case, the height of the associated interval order is $n-1$, and it
was shown by Felsner and Trotter in~\cite{b:FelTro} that the height of 
an interval order must be exponentially large in
terms of the chromatic number of its cover graph.
In fact, they conjecture that if $P$ is 
an interval order whose height is at most $2^{r-1}+\lfloor(r-1)/2\rfloor$, 
then the chromatic number of the cover graph of $P$ is at most~$r$.  If true, 
this statement is best possible.  Efforts to 
resolve this conjecture have led to interesting problems on 
hamiltonian paths in the subset lattice (see~\cite{b:BirHow}
and~\cite{b:StrTro} for quite recent
work on this theme).

Finally, we mention the work of \KriNes~\cite{b:KriNes} answering
a question posed by Ne\v{s}et\v{r}il and Trotter by proving the following 
theorem, as this work is central to the results presented here.

\begin{theorem}\label{t:KriNes}
For every $r\ge1$, there exists a poset $P$ with $\dim(P)\le 2$ so that
the chromatic number of the cover graph of $P$ is $r$.
\end{theorem}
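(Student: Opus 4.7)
The plan is to proceed by induction on $r$, producing for each $r$ a poset $P_r$ with $\dim(P_r) \leq 2$ and $\chi(G_{P_r}) \geq r$. It is convenient to exploit the fact that a poset has dimension at most two exactly when it embeds as a subposet of $\mathbb{R}^2$ under the coordinatewise (dominance) order; I will therefore identify $P_r$ with a finite point set in the plane, and read off the cover relations from which axis-aligned rectangle between two points is empty of other points. The cases $r = 1, 2$ are immediate.

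For the inductive step, I would mimic Zykov's construction for chromatic number. Assume $P_{r-1}$ has been built. Take $r-1$ disjoint scaled copies $Q_1, \dots, Q_{r-1}$ of $P_{r-1}$ and place them in pairwise incomparable regions of the plane (say, along an antidiagonal, so no point of $Q_i$ is comparable to any point of $Q_j$ for $i \neq j$); then the cover graph of $Q_1 \cup \cdots \cup Q_{r-1}$ inside $\mathbb{R}^2$ is exactly the disjoint union of the $G_{Q_i}$. For every tuple $(v_1, \dots, v_{r-1})$ with $v_i \in Q_i$, I would then introduce a new ``apex'' point $z(v_1, \dots, v_{r-1})$, placed so that $z$ forms a cover pair with each chosen $v_i$ and with no other vertex. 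Granting such a placement, the standard Zykov counting argument yields $\chi(G_{P_r}) \geq r$: in any proper $(r-1)$-coloring of $G_{P_r}$, each $Q_i$ exhibits all $r-1$ colors (by the inductive bound $\chi(G_{Q_i}) \geq r-1$), so one can choose $v_i \in Q_i$ of color $i$, and the apex $z(v_1, \dots, v_{r-1})$ would then have to avoid all $r-1$ colors, a contradiction.

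The main obstacle, and the part I expect to absorb the bulk of the work, is the geometric realization of this amalgamation. A point $z$ added to $\mathbb{R}^2$ is automatically comparable to many existing points depending only on its coordinates, so a naive placement creates unintended cover relations and may even destroy intended ones (if an apex lands inside a rectangle that previously witnessed a cover in some $Q_i$). To handle this, I would strengthen the inductive hypothesis to equip each $P_r$ with a collection of ``access slots''---disjoint rectangular regions of the plane in which a new point can be placed so as to cover (or be covered by) a prescribed vertex of $P_r$ and nothing else---and then arrange the copies $Q_i$ and the apex points so that such access slots persist for $P_r$ as well. Working out the exact layout of the copies along the antidiagonal, choosing disjoint regions for the many apex points so that they remain pairwise incomparable and respect the access-slot structure, and verifying case by case that no spurious covers appear, is the technically delicate portion of the proof that the proposal defers.
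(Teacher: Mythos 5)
The paper does not prove Theorem~\ref{t:KriNes}; it is quoted from K\v{r}\'{i}\v{z} and Ne\v{s}et\v{r}il's paper and used as a black box (it is the input to Lemma~\ref{l:referee} in the proof of Theorem~\ref{t:main-1}). So there is no proof in this document to compare yours against, and I will assess your argument on its own terms.

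Your proposal has a genuine gap, and it is precisely the step you defer: realizing the Zykov amalgamation inside $\mathbb{R}^2$ so that the cover relations come out right. This is not a routine bookkeeping step; it is where the entire difficulty of the theorem lives, and it is not clear that it can be done at all along the lines you sketch. Concretely, suppose the copies $Q_1,\dots,Q_{r-1}$ sit in small boxes along an antidiagonal and you want an apex $z$ that dominates $v_i\in Q_i$ for every $i$. Then $x(z)$ must exceed $x(v_i)$ for all $i$ and $y(z)$ must exceed $y(v_i)$ for all $i$, which forces $z$ into the far upper-right corner of the picture. But then $z$ also dominates, within each $Q_i$, every point whose $x$- and $y$-coordinates are at most those of $z$ --- in particular many points that are incomparable to $v_i$ in $Q_i$ (those to the lower-left of $z$ but not below $v_i$). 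For each such point $w$ you must exhibit a blocker $u$ with $w<u<z$ in dominance order, and when $w$ is chosen maximal in $Q_i$ among the points $z$ dominates, no such $u$ can come from $Q_i$ itself, nor from another copy $Q_j$ (everything there is incomparable to $w$), so it would have to be another apex point --- whose own placement is subject to the same constraints, and of which there are $|P_{r-1}|^{r-1}$ to place simultaneously. Symmetrically, the intended cover $z\succ v_i$ is fragile: it is destroyed the moment any other apex $z'$ satisfies $v_i<z'<z$, which happens whenever $z'$ corresponds to a tuple whose $i$-th entry is $v_i$ and $z'$ is placed below $z$. The ``access slot'' device you invoke does not resolve this, because an apex must sit in the upper-right quadrant of \emph{every} $v_i$ at once, and those quadrants overlap in a single region that all apices compete for. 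In short, you have reduced the theorem to a geometric packing problem that is at least as hard as the theorem itself, and you have given no evidence that it has a solution.

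For comparison, the actual K\v{r}\'{i}\v{z}--Ne\v{s}et\v{r}il argument is not a Zykov-style amalgamation. Their construction (the ``eyebrow'' construction alluded to in the title of their paper) builds the $2$-dimensional poset directly by a recursion that is designed from the start to keep spurious covers from arising in the plane, rather than trying to retrofit an existing chromatic-number amplifier into a dominance order. If you want to salvage the Zykov route, the minimum you would need is to strengthen the inductive hypothesis with an explicit, quantitatively controlled access structure and then actually carry out the placement and the case analysis --- as written, that is the whole proof, and it is missing.
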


If $G$ is the cover graph of poset $P$ with $\dim(P)\le 2$, then $\eye(G)\le 2$, so
as \KriNes\ noted, we have the following immediate corollary.

\begin{corollary}\label{c:KriNes}
For every $r\ge1$, there is a graph $G$
with $\eye(G)\le2$ and $\chi(G) = r$. 
\end{corollary}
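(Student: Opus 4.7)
The plan is to apply Theorem~\ref{t:KriNes} and then show that whenever $G$ is the cover graph of a poset $P$, one has $\eye(G)\le\dim(P)$. Given this inequality, the corollary is immediate: fix $r\ge 1$, take a poset $P$ with $\dim(P)\le 2$ whose cover graph $G_P$ has chromatic number $r$ (supplied by Theorem~\ref{t:KriNes}), and the graph $G=G_P$ now also satisfies $\eye(G)\le 2$.

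For the inequality, I would use a realizer of $P$ itself as the family of linear orders witnessing the eye parameter. Let $\{L_1,\ldots,L_t\}$ be a realizer of $P$ with $t=\dim(P)$ and view each $L_i$ as a linear order on the vertex set of $G_P$. Fix three distinct vertices $x,y,z$ of $G_P$ with $\{y,z\}$ an edge; without loss of generality, $z$ covers $y$ in $P$, so $y<z$ in each $L_i$. Suppose for contradiction that $y<x<z$ in every $L_i$. Because $P=\bigcap_i L_i$, this would give $y<x<z$ in $P$, contradicting the hypothesis that $z$ covers $y$. Therefore some $L_i$ does not place $x$ strictly between $y$ and $z$, which is precisely the condition required by the definition of $\eye(G)$.

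There is no real obstacle in this argument; the only subtle point is that the definition of the eye parameter permits $x$ to sit either above or below both $y$ and $z$, so excluding the strict betweenness configuration is the correct (and only) thing one needs to verify. The bound $\eye(G)\le 2$ then follows from the case $t=2$, completing the proof.
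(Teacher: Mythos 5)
Your proof is correct and follows the same route as the paper: apply Theorem~\ref{t:KriNes} and use the fact that a realizer of $P$ witnesses $\eye(G_P)\le\dim(P)$. The paper simply asserts this inequality (for $\dim(P)\le 2$) without spelling it out; your betweenness argument via $P=\bigcap_i L_i$ and the covering relation is a clean justification of exactly that step.
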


The graphs constructed by \KriNes\ in the proof of Theorem~\ref{t:KriNes} 
and Corollary~\ref{c:KriNes}
have girth four.  However, they were able to prove the following extension.

\begin{theorem}\label{t:KriNes-2}
For every pair $(g,r)$ of positive integers, there is a graph $G$
with $\eye(G)\le3$, $\girth(G)\ge g$ and $\chi(G)= r$. 
\end{theorem}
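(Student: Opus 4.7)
The plan is to combine the construction with eye parameter at most $2$ from Corollary~\ref{c:KriNes} with a high-girth lifting in the spirit of \NesRod, using an additional third linear order to compensate for the structural complications of the lift.  I would first invoke Corollary~\ref{c:KriNes} to fix a graph $H$ with $\chi(H) \ge r$ and $\eye(H) \le 2$, together with linear orders $L_1, L_2$ on $V(H)$ witnessing the eye bound.  The graph $H$ may have girth as small as $4$, so the remainder of the construction must ``open up'' short cycles while preserving both the chromatic number and a small eye parameter.

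I would then construct $G$ via an iterated partite amalgamation built on $H$.  Here $V(G)$ is partitioned into parts indexed by $V(H)$; every edge of $G$ joins two parts whose indices form an edge of $H$; and at each step of the iteration a new transversal copy of $H$ is attached.  Standard partite arguments---applied for sufficiently many iterations---show that $\girth(G) \ge g$, while a Hales--Jewett type argument applied to the partite structure forces $\chi(G) \ge \chi(H) \ge r$.

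Finally I would exhibit three linear orders on $V(G)$.  Let $M_1$ be the lexicographic order that first compares the $L_1$-positions of the parts containing two vertices and breaks ties by a fixed intra-part order, and define $M_2$ similarly using $L_2$.  Let $M_3$ order vertices by the step of the amalgamation at which they were introduced, breaking ties arbitrarily, so that vertices lying in the same part but belonging to different copies of $H$ are separated in $M_3$.  To verify $\eye(G) \le 3$ one argues: for an edge $\{y, z\}$ (which lies in a single copy of $H$) and a third vertex $x$, if $x$'s part differs from both $y$'s and $z$'s parts then one of $M_1, M_2$ succeeds by the eye property of $H$; and if $x$'s part equals $y$'s or $z$'s part then $x$ lies outside the copy of $H$ containing $\{y, z\}$, so its introduction step differs from those of $y$ and $z$ in a way that places it outside $[y, z]$ in $M_3$.

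The main obstacle is engineering the partite amalgamation so that the third order $M_3$ genuinely has the required separation property.  In a naive iteration, a vertex can belong to many copies of $H$ and its ``introduction step'' is ambiguous, and even when well-defined, the relative $M_3$-position of a same-part vertex and a given edge need not place the vertex outside the edge-interval.  A careful variant---built along a tree of copies with each vertex uniquely associated to its first copy, and with copy indices chosen compatibly---is needed to make $M_3$ behave correctly.  Showing that such a refinement still preserves girth and chromatic number is the delicate technical step, and is precisely what accounts for the increase from $\eye(G) \le 2$ (Corollary~\ref{c:KriNes}) to $\eye(G) \le 3$ in Theorem~\ref{t:KriNes-2}.
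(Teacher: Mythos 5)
Theorem~\ref{t:KriNes-2} is not proved in this paper: it is cited as a result of \KriNes, and what the authors actually prove is the strictly stronger Theorem~\ref{t:main-2}, where $\eye(G)\le 3$ is improved to $\eye(G)\le 2$. That proof goes by induction on $r$: starting from an odd cycle at $r=3$, at each step one takes a \NesRod\ $n$-uniform hypergraph $\cgH$ with girth $\ge g$ and chromatic number $r+1$ (where $n=|V(G(g,r))|$), hangs a disjoint copy of the current poset $P(g,r)$ under each hyperedge, wires the new maximal elements to the copies via a fixed bijection governed by $L_1$ and an auxiliary order $L(B)$, and splices the new maximal elements into $L_1$ and $L_2$ to get $M_1$ and $M_2$. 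The antichain structure of the newly added top layer is exactly what lets the two orders absorb all new ``eye'' constraints, so no third order is ever needed.

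Your proposal is a genuinely different route: freeze a single \KriNes\ graph $H$ with $\chi(H)\ge r$ and $\eye(H)\le 2$, apply a partite amalgamation once to blow up the girth, and spend a third linear order to repair the damage. The part-ordering idea is sound for a vertex $x$ lying in a part distinct from both endpoints of the edge $\{y,z\}$: there the eye property of $H$ does transfer to $M_1,M_2$. But the remaining case---$x$ in the same part as $y$ or $z$---is the whole problem, and your $M_3$ does not resolve it. In the \NesRod\ partite amalgamation a vertex belongs to many transversal copies, the endpoints $y$ and $z$ of a single copy's edge are in general introduced at different iteration steps, and nothing prevents a same-part vertex $x$ from being introduced at a step strictly between theirs, placing $x$ inside $[y,z]$ in $M_3$. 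You acknowledge this (``a careful variant \ldots is needed''), but the needed variant is not a deferred technicality: it is the content of the theorem. Worse, the fix you gesture at---a tree of copies where each vertex has a unique first copy and copies barely overlap---is in tension with the chromatic-number argument, since the heavy overlap of transversal copies is precisely what forces $\chi(G)\ge r$ in the amalgamation. As written, then, the proposal shows only that three orders would suffice if a suitable separating order existed, not that one does; there is a genuine gap at the step that distinguishes $\eye\le 3$ from nothing at all.
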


They asked whether this result remains true if we require $\eye(G)\le 2$.
Our second main result will be to answer this question in the affirmative
by proving the following theorem, which is in fact a slightly stronger
result.

\begin{theorem}\label{t:main-2}
For every pair $(g,r)$ of positive integers, there is a poset $P=P(g,r)$ with
cover graph $G=G(g,r)$ so that the height of $P$ is $r$, while
$\girth(G)\ge g$ and $\chi(G) =r$.  Furthermore, there
are two linear extensions $L_1$ and $L_2$ of $P$ witnessing that
$\eye(G)\le 2$.
\end{theorem}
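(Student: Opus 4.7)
The plan is to build $P(g,r)$ by induction on $r$, using a \NesRod-style partite amalgamation at each stage to kill short cycles while carefully propagating a pair of linear extensions that witness $\eye(G)\le 2$.

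For the base case $r=1$ take $P$ to be a single element. For the inductive step suppose $P' = P(g,r-1)$ has height $r-1$, cover graph $G'$ of girth at least $g$ and chromatic number $r-1$, and linear extensions $L_1',L_2'$ of $P'$ witnessing $\eye(G') \le 2$. I would construct $P$ by taking many disjoint copies of $P'$ and adding a new top level of elements whose cover neighborhoods are prescribed by the \NesRod\ partite lemma, so that every $(r-1)$-coloring of the cover graph of some copy of $P'$ produces a ``rainbow'' pattern on a top-level antichain, forcing a new $r$th color on the corresponding new element. The partite lemma simultaneously ensures that the newly introduced cover edges do not create any cycles of length less than $g$, preserving girth.

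The heart of the construction is the pair $L_1,L_2$ on $P$. I linearly order the $N$ copies of $P'$ and place them as contiguous blocks in each $L_i$; inside the $j$th block I use the restriction of $L_i'$ to copy $j$. The new top-level elements are appended above all copies in opposite orders in $L_1$ versus $L_2$, so that every pair of top elements is flipped between the two extensions. Cover edges of $P$ lie either entirely within one copy of $P'$ or connect a new top element to elements below it, so the eye-2 condition factors into cases: edges inside a single copy are handled by the inductive witness, while for an edge $\{u,y\}$ with $u$ a new top element, any third vertex $x$ outside the principal ideal of $u$ lies either entirely above or entirely below $y$'s block in one of $L_1,L_2$, and hence is not between $y$ and $u$.

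The main obstacle I anticipate is the interaction between the amalgamation and the block ordering when a single new top element $u$ covers elements in more than one copy: then the block ordering can place $u$ above some of its covers but leave another cover edge ``straddled'' by a vertex $x$ in both $L_1$ and $L_2$. Overcoming this will require either arranging the amalgamation so that each new top element has its cover neighborhood inside a single copy of $P'$ (shifting the rainbow-forcing job to a careful choice of subsets within one copy), or refining the block structure so that all copies sharing a given new top element are grouped consecutively in each $L_i$. Verifying that the resulting orderings remain genuine linear extensions of the transitive closure of the cover relation, and still witness $\eye(G)\le 2$ across the new inter-copy cover edges, is the key technical step; the freedom offered by using two independent extensions rather than one is precisely what makes this feasible.
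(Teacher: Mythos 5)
Your overall skeleton is the same as the paper's: induct on $r$, build $P(g,r+1)$ by pasting many disjoint copies of $P(g,r)$ underneath a new top level whose attachment pattern is governed by a \NesRod\ hypergraph of girth $\geq g$, and check that the hypergraph's chromatic number forces a new color while the hypergraph's girth preserves the graph girth. You also correctly identify the place where the argument can fail: a new top element $b$ covers one element in each of the (many) copies indexed by hyperedges containing $b$, so a naive block ordering of $Z$ in both $L_1$ and $L_2$ cannot handle the cover edges between $B$ and $Z$. But neither of your two proposed repairs works, and the paper's actual device is different from both. Your option (a), confining each top element's neighborhood to a single copy, destroys the chromatic argument: the new top elements then form an independent set attached componentwise, and you lose the ``monochromatic hyperedge forces an $(r-1)$-colored copy of $G$'' contradiction that is the whole point of the \NesRod\ amalgamation. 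Your option (b) requires a consecutive-ones-type ordering of the hyperedges with respect to vertex membership, which a high-girth high-chromatic hypergraph will not admit. There is also a gap you do not flag: with both $M_1$ and $M_2$ placing all of $Z$ below all of $B$ and organized as contiguous copy-blocks, a vertex $x=(E,a')$ in the same copy as $y=(E,a)$ with $a'>a$ in both $L_1'$ and $L_2'$ (which happens whenever $a<a'$ in $P'$) is between $y$ and its cover $b$ in both orders.

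The paper's fix is to use \emph{asymmetric} extensions. In $M_2$, all of $Z$ lies below all of $B$ and the copies $P(E)$ are contiguous blocks, each internally ordered by $L_2'$; this handles edges inside a copy. In $M_1$, however, $Z$ is \emph{not} organized by copy: instead, for each $b\in B$ the set $N(b)$ of elements covered by $b$ (one from each copy $P(E)$ with $b\in E$, and these sets partition $Z$) is placed as a contiguous block immediately beneath $b$, and crucially the restriction of $M_1$ to $N(b)$ is taken to be the \emph{dual} of the restriction of $M_2$ to $N(b)$. That dual ordering is the missing idea: in $M_1$ the only candidates lying between $(E,a)$ and $b$ are other elements of $N(b)$, and anything above $(E,a)$ inside $N(b)$ in $M_1$ is below $(E,a)$ in $M_2$ by the duality, hence outside the interval in $M_2$. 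This interleaved $M_1$ still restricts to $L_1$ on each copy because the paper labels $A$ by $L_1$, labels each hyperedge by $L(B)$, and matches them by index. Without this re-sorting of $Z$ by cover-neighborhood plus the local duality, the inductive step does not close.
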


The reader should note that we do not claim that the poset $P(g,r)$
in Theorem~\ref{t:main-2} is $2$-dimensional.  In fact, the dimension
of $P(g,r)$ grows rapidly with $r$, even with $g$ fixed.  We will
return to this issue in the last section of this paper.

\section{Proof of the First Main Theorem}

We first explain why Theorem~\ref{t:main-1} follows as a relatively
straightforward corollary to
Theorem~\ref{t:KriNes}, starting with a lemma which we believe is
of independent interest\footnote{We thank an anonymous referee for
pointing out that our original manuscript included this lemma
implicitly.}.  

\begin{lemma}\label{l:referee}
Let $P$ be a poset, let $G$ be the cover graph of $P$ and
let $Q$ be the incidence poset of $P$. Then
$\dim(Q)\le 2\dim(P)$.
\end{lemma}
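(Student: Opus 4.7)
The plan is to apply Proposition~\ref{p:alternate} to $Q$; since the vertex set of $G$ is exactly the ground set of $P$, every linear extension of $P$ is, in particular, a linear order on $V(G)$. Let $d=\dim(P)$ and fix a realizer $\{L_1,\dots,L_d\}$ of $P$. For each $i$, let $L_i^*$ denote the reverse of $L_i$. I will show that the $2d$ linear orders $L_1,L_1^*,\dots,L_d,L_d^*$ satisfy conditions~(1) and~(2) of Proposition~\ref{p:alternate}, which yields $\dim(Q)\le 2d$ as required.

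Condition~(2) is immediate: for any distinct $x,y\in V(G)$, one of $L_1,L_1^*$ places $x$ above $y$. The substance of the argument lies in condition~(1). Fix distinct $x,y,z\in V(G)$ with $\{y,z\}$ an edge of $G$, and assume without loss of generality that $y$ covers $z$ in $P$. Then $y>z$ in every $L_i$ and $z>y$ in every $L_i^*$. Hence $L_i$ witnesses this triple precisely when $x>y$ in $L_i$, and $L_i^*$ witnesses it precisely when $x>z$ in $L_i^*$, which is equivalent to $x<z$ in $L_i$. Arguing by contradiction, suppose no order in the family witnesses the triple. Then for every $i$ both $x<y$ and $x>z$ hold in $L_i$, i.e., $z<x<y$ in each $L_i$. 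Intersecting the realizer produces $z<x<y$ in $P$ with $x$ distinct from $y$ and $z$, directly contradicting the hypothesis that $y$ covers $z$.

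I anticipate no serious obstacle beyond careful bookkeeping in translating between $L_i$ and $L_i^*$ when invoking condition~(1). The device of doubling a realizer by taking reverses is standard, and the cover hypothesis enters exactly once---but decisively---to close the contradiction by ruling out any element wedged strictly between $z$ and $y$.
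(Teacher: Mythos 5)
Your proof is correct and takes essentially the same approach as the paper: both use the family consisting of a realizer of $P$ together with its duals, and both rely decisively on the cover hypothesis to rule out an element wedged strictly between $y$ and $z$ in $P$. The only cosmetic difference is that you argue by contradiction (assuming $z<x<y$ in every $L_i$) where the paper runs a direct two-case analysis on whether $x<z$ in $P$.
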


\begin{proof}
Let $t=\dim(P)$ and let $\cgR=\{L_i:1\le i\le t\}$ be a
realizer of $P$.  Then for each $i=1,2,\dots,t$, let
$L_i^d$ be the dual of $L_i$, i.e., $x>y$ in $L_i^d$ if
and only if $x<y$ in $L_i$.  We claim that the
family $\cgR^*=\cgR\cup\{L_i^d:1\le i\le t\}$ witnesses
that $\dim(Q)\le 2t$.

To see this, note that the second condition of Proposition~\ref{p:alternate} 
holds since $L_1$ and $L_1^d$ are in the family.
Now let $x$, $y$ and $z$ be distinct vertices with $\{y,z\}$ an
edge in $G$.  Without loss of generality, we take $y<z$ in $P$.
If $x\not<z$ in $P$, then there is some $i$ with
$1\le i\le t$ so that $x>z$ in $L_i$.  This implies
$x>z>y$ in $L_i$.  
So we may assume that $x<z$ in $P$.  Since $\{y,z\}$ is an edge of
the cover graph, we cannot have $y<x$ in $P$.  It follows that
there is some $j$ with $1\le j\le t$ so that $x<y$ in $L_j$.
This implies that $x<y<z$ in $L_j$ and $x>y>z$ in $L_j^d$.
This completes the proof of the lemma.
\end{proof}

We now show how Theorem~\ref{t:main-1} follows as an easy corollary
to Theorem~\ref{t:KriNes}.  Let $r\ge1$ and let $P$ be the poset from
Theorem~\ref{t:KriNes}.  Then let $G=G_P$ be the cover graph of 
the poset $P$, noting that $\chi(G) \ge r$.  
Since $\dim(P)\le 2$, from Lemma~\ref{l:referee}, we know that
the dimension of the incidence poset of $G$ is at most four.

\section{Proof of the Second Main Theorem}

We fix an integer $g\ge 4$ and then argue by induction on $r$.  The basic
idea behind the proof will be to make a minor adjustment to
the construction used by \NesRod\ in~\cite{b:NesRod}.  The cases
$r=1$ and $r=2$ are trivial.
To handle the case $r=3$, we let $n$ be an odd integer with
$n\ge g$.  Then we take $G = G(g,3)$ as an odd cycle with vertex
set $\{a_1,a_2,\dots,a_n\}$, with $\{a_i,a_{i+1}\}$ an edge for each
$i=1,2,\dots,n-1$.  Also, $\{a_n,a_1\}$ is an edge of $G$.
Then we take $P=P(g,3)$ as a poset whose cover graph is $G$ by setting the
following covering relations in $P$:
\[
a_1<a_2<a_3>a_4<a_5>a_6<a_7>a_8<a_9>\dots>a_{n-1}<a_n>a_1.
\]
We then take
\[
L_1=a_1<a_2<a_4<a_3<a_6<a_5<a_8<a_7<\dots<a_{n-1}<a_{n-2}<a_n
\]
and
\[
L_2=a_{n-1}<a_1<a_n<a_{n-3}<a_{n-2}<\dots<a_6<a_7<a_4<a_5<a_2<a_3.
\]
It is easy to see that $L_1$ and $L_2$ are linear extensions of $P$.
Furthermore, the two endpoints of an edge in $G$ occur consecutively
in either $L_1$ or $L_2$, except for the edge $\{a_{n-1},a_n\}$.  
However, only $a_{n-2}$ is between $a_{n-1}$ and $a_n$ in $L_1$.
Also, only $a_1$ is between $a_{n-1}$ and $a_n$ in $L_2$. It follows
that $L_1$ and $L_2$ witness that $\eye(G)\le 2$.

Now suppose that for some $r\ge3$, we have constructed a poset $P=
P(g,r)$ with cover graph $G=G(g,r)$ so that the height of $P$ is $r$, while 
$\girth(G)\ge g$ and $\chi(G) = r$.  Suppose further that $L_1$ and $L_2$ 
are linear extensions of $P$ witnessing that 
$\eye(G)\le 2$.  

We now explain how to construct a poset $Q=P(g,r+1)$ with cover graph $H=
G(g,r+1)$ so that
the height of $Q$ is $r+1$, while $\girth(H)\ge g$ and
$\chi(H)=r+1$.  We will also construct linear extensions
$M_1$ and $M_2$ of $Q$ witnessing that $\eye(H)\le 2$.  As the reader
will sense, there is considerable flexibility in how these steps are
taken, and our approach is an effort to make
the exposition as clear as possible.

Let $A$ denote the vertex set of $G$ and let $n=|A|$. 
Using the results of \NesRod\ as developed in~\cite{b:NesRod},
we know there exists a hypergraph $\cgH$ satisfying the following
conditions: $\cgH$ is a simple $n$-uniform hypergraph; the girth
of $\cgH$ is at least $g$; and the chromatic number of $\cgH$ is $r+1$.
Let $B$ and $\cgE$ denote, respectively, the vertex set and the
edge set of $\cgH$.  In the discussion to follow, we consider each
edge $E\in\cgE$ as an $n$-element subset of $B$.

The poset $Q$ is assembled as follows.  Set $Z=\cgE\times A$.
The ground set of $Q$ will be $B\cup Z$ with all elements of
$B$ maximal in $Q$.  For each edge $E$ in $\cgE$, the elements
of $\{E\}\times A$ determine a subposet of $Q$ which we
will denote $P(E)$.  When $a$ and $a'$ are distinct elements of $A$,
we will set $(E,a)<(E,a')$ in $Q$ if and only if $a<a'$ in $P$.
Accordingly, for each $E\in\cgE$, the subposet $P(E)$ is isomorphic
to $P$.  Also, when $E,E'\in \cgE$ and $E\neq E'$, we make
all elements of $P(E)$ incomparable with all elements of $P(E')$.

We pause to point out that regardless of how the comparabilities
between $B$ and $Z$ are defined in $Q$, for each edge $E\in\cgE$,
the covering edges of $P(E)$ are covering edges in $Q$ and these
edges form a copy of $G$.

We now describe these comparabilities between $B$ and $Z$. This will be done by
prescribing when an element $b\in B$ covers an element $(E,a)\in Z$.
We begin by choosing an arbitrary linear order $L(B)$ on $B$.  
Also, let $\{a_1,a_2,\dots,a_n\}$
be a labelling of $A$ so that $L_1$ is the subscript order, i.e.,
$a_i<a_j$ in $L_1$ if and only if $i<j$.   Next, we fix an edge
$E\in\cgE$ and describe the cover relations between $B$
and $P(E)$.  This process will be repeated for each edge $E\in\cgE$ and
when this step has been completed, the poset $Q$ is fully determined. 
First, when $b\in B-E$,  we make $b$ incomparable to all elements of 
$P(E)$ in $Q$.  Second, let $\{b_1,b_2,\dots,b_n\}$ be the labelling of
the elements of $E$ so that $b_i<b_j$ in $L(B)$ if and only if $i<j$.
Then for each $i=1,2,\dots,n$, we make $b_i$ cover
$(E,a_i)$ in $Q$.  It follows that if $(E,a)\in Z$, then there is
a unique element $b\in B$ so that $b$ covers $(E,a)$ in $Q$.

Now that $Q=P(G,r+1)$ has been defined, we take $H=G(g,r+1)$ as the
cover graph of $Q$, and we pause to show that the height of $Q$ is
$r+1$, while $\girth(H)\ge g$ and $\chi(H)=r+1$.
First, we note that the height of $Q$ is at most $r+1$, since we have 
added $B$ as a set of maximal elements to a family of pairwise disjoint
and incomparable copies of $P$.  On the other hand, once we
have shown that $\chi(H)= r+1$, we will have also shown that
the height of $H$ must be $r+1$, using the dual form of Dilworth's
theorem.  

Second, we note that $\chi(H)\ge r$, since $H$ contains copies of $G$.
On the other hand, it is trivial that we may color all elements of
$Z$ with $r$ colors and use one new
color on the independent set $B$, so that $\chi(H)\le r+1$.
Now suppose that $\chi(H)=r$, and let
$\phi$ be a proper coloring of $H$ using
$r$ colors.  Then since the chromatic number of $\cgH$ is 
$r+1$, there is some edge $E$ of $\cgH$ on which $\phi$ is constant.
This implies that $\phi$ colors the cover graph of $P(E)$ with only
$r-1$ colors, which is impossible. The contradiction shows that
$\chi(H)=r+1$, as desired.

Third, we show that the girth of $H$ is at least $g$.  Consider a cycle
$C$ in $H$.  If there is an edge $E$ of $\cgH$ so that
$C$ is contained entirely within the cover graph of $P(E)$, 
then it has size at least $g$.  So we may assume that $C$ involves
vertices from copies of $P$ associated with two or more edges in $\cgE$.
Now the fact that the covering edges between $Y$ and each $P(E)$ are
formed using a bijection means that once the cycle enters some $P(E)$, it
must pass through at least two vertices before leaving.  So the
girth requirement is satisfied (generously) be the fact that the
girth of $\cgH$ is at least $g$.  In this detail, we point out that we
are using essentially the same idea as in~\cite{b:NesRod}.

Now we turn our attention to the eye parameter.
To complete the proof, we must construct two linear extensions $M_1$ and
$M_2$ of $Q$ witnessing that $\eye(H)\le 2$.  As we remarked previously,
there is considerable flexibility in how this is done.

For each $b\in Y$, let $N(b)$ denote the set of all
elements $(E,a)$ from $Z$ such that $b$ covers $(E,a)$ in $Q$, i.e.,
$N(b)$ is just the neighborhood of $b$ in the cover graph $H$.  Note
that $N(b)$ is an antichain in the poset $Q$.

Let $L(\cgE)$ be an arbitrary linear order on $\cgE$.
We define linear extensions $M_1$ and $M_2$ by the following
rules (starting with the rules for $M_2$):

\begin{enumerate}
\item The restriction of $M_2$ to $B$ is an arbitrary linear order.
In $M_2$ all elements of $Z$ are below all elements of $Y$. Furthermore,
if $(E,a)$ and $(E',a')$ are distinct elements of $Z$, then
$(E,a)<(E',a')$ in $M_2$ if and only if either $E<E'$ in $L(\cgE)$ or 
$E=E'$ and $a<a'$ in $L_2$.  
\item The restriction of $M_1$ to $B$ is the linear order $L(B)$.
In $M_1$, for each $b\in B$, all elements of $N(b)$ will be placed 
in the gap immediately under $b$ and above all other elements (if any) 
of $B$ which are under $b$ in $L(B)$.  The restriction of $M_1$ to 
$N(b)$ will be the dual of the restriction of $M_2$ to $N(b)$. 
\end{enumerate}

We pause to show that $M_1$ and $M_2$ are linear extensions of $Q$, and
we remark that it is enough to show that they both respect the covering
relations in $Q$.  First, we note that for each $i=1,2$, and for
each $E\in\cgE$, if $a$ and $a'$ are distinct elements of $A$, then
$(E,a)<(E,a')$ in $M_i$ if and only if $a<a'$ in
$L_i$.  On the other hand, if $b\in B$, $(E,a)\in Z$ and
$b$ covers $(E,a)$ in $Q$, then $(E,a)\in N(b)$ so it is placed below
$b$ in $M_1$.  Finally, we note that all elements of $Z$ are below
all elements of $B$ in $M_2$.  We conclude that $M_1$ and $M_2$ are linear
extensions of $Q$, as desired.

Finally, we explain why $M_1$ and $M_2$ witness that $\eye(H)\le 2$.
Consider how an edge might possibly trap a vertex in both $M_1$ and
$M_2$.  If the edge is an edge in the cover graph of some $P(E)$, then 
the linear extension $M_2$ forces the vertex to also belong to $P(E)$.  
But the restriction of $M_1$ and $M_2$ to $P(E)$ are just like 
$L_1$ and $L_2$ for $G$, so this situation cannot
lead to a problem.

Similarly, if the edge joins some $b\in Y$ to a vertex $(E,a)$ in
$N(b)$, then the only potential problem is a vertex $(E',a')
\in N(b)$ with $(E,a)<(E',a')<y$ in $M_1$.  However, the rules for
$M_1$ and $M_2$ imply that
$(E',a')<(E,a)<b$ in $M_2$.  This completes the proof of Theorem~\ref{t:main-2}.

\section{Conjectures and Questions}

We have made some effort, without success, to construct a 
poset $P$ with cover graph $G$ so that
the dimension of $P$ is small; the girth of $G$ is
large; and the chromatic number of $G$ is large.
Accordingly, we believe it reasonable to make the following
conjecture.

\begin{conjecture}  For every pair $(g,d)$ of integers, with
$g\ge 5$ and $d\ge1$, there is an integer $r=r(g,d)$ so 
that if $G$ is the cover graph of a poset $P$, $\dim(P)\le d$ 
and $\girth(G)\ge g$, then $\chi(G)\le r$.
\end{conjecture}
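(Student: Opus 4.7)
The plan is to attempt the conjecture by induction on the dimension $d$, with the base case $d=1$ being trivial since then $P$ is a chain and $G$ is a path. For the inductive step, I would fix a realizer $\{L_1,\dots,L_d\}$ of $P$ and use it to embed the ground set into $\mathbb{R}^d$ under the product order, so that $x<y$ in $P$ if and only if the image of $x$ lies coordinatewise below that of $y$. A cover edge then corresponds to a pair that is coordinatewise comparable with no intermediate element. The aim would be to combine this geometric picture with the girth-five hypothesis either to decompose the cover graph into subgraphs each realizable by a $(d-1)$-dimensional poset, or, more optimistically, to bound the degeneracy of $G$ directly by a function of $g$ and $d$.

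The structural observation I would try to exploit is that any $4$-cycle in the cover graph of $P$ corresponds, under the realizer embedding, to a $K_{2,2}$-type dominance configuration in $\mathbb{R}^d$: two coordinatewise-incomparable elements that both lie coordinatewise below (or above) two other coordinatewise-incomparable elements. Girth at least five forbids all such configurations, and I would hope this constraint, propagated across the $d$ coordinate projections, forces the cover graph to be $f(g,d)$-degenerate for some function $f$ depending only on $g$ and $d$; the conjecture would then follow from a greedy coloring argument yielding $\chi(G)\le f(g,d)+1$. A natural intermediate step is to verify the case $d=2$, where the realizer yields a planar embedding and classical structural theorems for two-dimensional posets may provide the required degeneracy bound; success there would suggest the induction can be pushed through in higher dimension.

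The hard part will be coupling the girth and dimension hypotheses in a genuinely synergistic way. Either hypothesis in isolation is insufficient: \KriNes\ (Theorem~\ref{t:KriNes}) produce cover graphs of unbounded chromatic number in dimension two with girth four, while the construction behind Theorem~\ref{t:main-2} yields cover graphs of arbitrarily large girth and chromatic number with small eye parameter but growing dimension. Thus the absence of $4$-cycles---not merely triangles---must enter in an essential way, and $K_{2,2}$-freeness alone is a notoriously weak combinatorial hypothesis (cf.\ Zarankiewicz-type problems). My expectation is that the decisive new ingredient will come from treating the realizer not merely as a tuple of linear orders but as a quantitative geometric object, so that a K\H{o}v\'{a}ri-S\'{o}s-Tur\'{a}n style incidence bound can combine with the girth condition to rule out the dense local structure required for high chromatic number. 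If none of these avenues yields progress, I would turn the attack around and try to refine the construction of Theorem~\ref{t:main-2} so that $\dim(P)$ is bounded while $\chi(G)$ grows, which would disprove the conjecture.
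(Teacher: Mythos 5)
This statement is a \emph{conjecture}; the paper offers no proof of it, so there is no argument of the authors' to compare yours against. What you have written is also not a proof but a research program, and it should be evaluated on those terms.

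There are concrete problems with the program as sketched. First, the claim that for $d=2$ ``the realizer yields a planar embedding'' is false: cover graphs of two-dimensional posets need not be planar. Indeed, Theorem~\ref{t:KriNes} (\KriNes) produces two-dimensional posets whose cover graphs have arbitrarily large chromatic number, and planar graphs have chromatic number at most four, so those cover graphs are certainly nonplanar. The intermediate step you propose as a sanity check would therefore already fail as stated. (Two-dimensional posets do embed as point sets in $\mathbb{R}^2$ under dominance, but that is not a planar drawing of the cover graph.) Second, the hoped-for implication ``girth $\ge 5$ together with $\dim(P)\le d$ forces $f(g,d)$-degeneracy of the cover graph'' is the entire content of the conjecture and is left completely unsupported; noting that $4$-cycles correspond to certain dominance patterns in $\mathbb{R}^d$ is an accurate restatement of the hypotheses, not an argument. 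You yourself flag that $K_{2,2}$-freeness is weak (incidence graphs of projective planes are $C_4$-free with degeneracy on the order of $\sqrt{n}$), so something genuinely new would be required to show that the dimension bound rules out such dense $C_4$-free structure, and the K\H{o}v\'{a}ri--S\'{o}s--Tur\'{a}n suggestion is again a direction, not a step. Third, the proposed induction on $d$ has no mechanism offered for the inductive step: nothing in the writeup explains how to pass from a $d$-dimensional realizer to a family of $(d-1)$-dimensional posets while controlling girth and chromatic number.

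What you get right is the framing: you correctly identify that girth four does not suffice (Theorem~\ref{t:KriNes}), that the Theorem~\ref{t:main-2} construction has unbounded dimension, and that both hypotheses of the conjecture must interact. The final suggestion --- trying to refine the Theorem~\ref{t:main-2} construction to keep $\dim(P)$ bounded and thereby refute the conjecture --- is a legitimate alternative avenue and worth pursuing in parallel. But as submitted, the writeup contains a false claim (planarity at $d=2$) and otherwise consists of hopes rather than arguments, so it does not constitute progress toward a proof.
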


In another direction, we return to Proposition~\ref{p:eye-dim}
and make the following conjectures.

\begin{conjecture}
For every $t\ge1$, there is a graph $G$ so that if
$P_G$ is the incidence poset of $G$, then
$\eye(G)=\dim(P_G)$.
\end{conjecture}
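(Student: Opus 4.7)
The plan is to prove the conjecture in its asymptotic form: for arbitrarily large values $t$, there exists a graph $G$ with $\eye(G) = \dim(P_G) = t$. Since Proposition~\ref{p:eye-dim} gives $\eye(G) \le \dim(P_G)$ automatically, the task reduces to producing graphs with large $\eye$ where the bound $\dim(P_G) \le 2\eye(G)$ is in fact an equality to $\eye(G)$ rather than to $2\eye(G)$.

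My first approach is to take $G = K_n$ for carefully chosen $n$. As recalled in Section~2, $\dim(P_{K_n}) = \lg\lg n + (1/2 + o(1))\lg\lg\lg n$, and the lower bound $\dim(P_{K_n}) \ge \lg\lg n$ comes from an Erd\H{o}s--Szekeres-style extraction of an element that is simultaneously the maximum of a large subset in every order of a realizer. I would try to sharpen this extraction to detect instead an element that is the \emph{median} of some triple in all orders, which is exactly what the eye condition forbids. If one can show $\eye(K_n) \ge \lg\lg n - O(1)$, then by choosing $n$ at the jumps of $\dim(P_{K_n})$ one obtains an infinite sequence of values $t$ for which $\eye(K_n) = \dim(P_{K_n}) = t$.

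A fallback, should the direct route lose a constant, is an explicit inductive construction. Given a graph $G_t$ with $\eye(G_t) = \dim(P_{G_t}) = t$ witnessed by the same $t$ orders $L_1, \dots, L_t$, build $G_{t+1}$ by substituting each vertex of $G_t$ by a Ne\v{s}et\v{r}il--R\"{o}dl-type hypergraph gadget in the spirit of Section~4; the old orders extend naturally, and a new order $L_{t+1}$ handles the triples crossing gadgets. The main obstacle in both approaches is the matching lower bound on $\eye$: the eye condition forbids only ``always middle'', which is strictly weaker than ``always top'', so Ramsey-type arguments for $\dim$ do not transfer automatically, and a more delicate argument tracking medians across several orders is required.

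A subsidiary remark worth flagging at the outset: the conjecture is in fact false in its strongest reading for $t = 2$. Indeed, $\dim(P_G) \le 2$ forces $G$ to avoid $K_{1,3}$ and $C_3$ (each of which has incidence dimension exactly $3$, as follows from Schnyder's theorem together with a direct argument), so $G$ must be a linear forest and hence $\eye(G) = 1$. This small-case obstruction is presumably the reason the conjecture is intended asymptotically, and it justifies focusing the proof effort on the large-$t$ regime where the approaches above can succeed.
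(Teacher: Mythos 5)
This statement is Conjecture~5.2 in the paper, not a theorem: the authors explicitly leave it open and offer no proof, so there is no paper argument against which to compare your attempt. Judged on its own, your proposal is an outline rather than a proof, and both routes stall at exactly the step that would need to be proved.

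In the $K_n$ route, the needed lemma is the lower bound on $\eye(K_n)$, and you yourself flag the obstacle: the eye condition forbids only ``always the median,'' which is genuinely weaker than the ``always the maximum'' pattern that the Erd\H{o}s--Szekeres/Ramsey argument for $\dim(P_{K_n})\ge\lg\lg n$ exploits. You write that you ``would try to sharpen this extraction'' and that a ``more delicate argument tracking medians'' is ``required'' --- that argument is the entire content of the conjecture, and it is not carried out. Moreover, even if one established $\eye(K_n)\ge\lg\lg n - O(1)$, there is still a $\Theta(\lg\lg\lg n)$ gap up to $\dim(P_{K_n})$, and the remark about ``choosing $n$ at the jumps'' gives no reason why the two integer-valued step functions should actually coincide at any $n$. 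The inductive fallback has the same defect: the claim that a new order handles cross-gadget triples while keeping $\eye$ \emph{equal} to $\dim$ (not merely bounded by it) is asserted, not proved.

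Your side remark on $t=2$ is a genuine and correct observation, though the stated reasoning has a small gap. Forbidding $K_{1,3}$ and $C_3$ only forces maximum degree at most two together with triangle-freeness, which still allows cycles $C_n$ for $n\ge 4$, and those are not linear forests. The clean argument is direct: with two orders, condition (2) of Proposition~\ref{p:alternate} forces $L_2=L_1^d$, and then condition (1) says no vertex lies strictly between the endpoints of any edge in $L_1$; hence every edge has consecutive endpoints in $L_1$, $G$ is a linear forest, and $\eye(G)\le 1$. Thus no graph has $\eye(G)=\dim(P_G)=2$, and the conjecture must be read for $t\ne 2$ or asymptotically --- a worthwhile correction to the statement, but one that leaves the substantive question for large $t$ untouched.
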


\begin{conjecture}
For every $t\ge1$, there is a graph $H$ so that if
$P_H$ is the incidence poset of $H$, then
$\dim(P_H)=2\eye(H)$.
\end{conjecture}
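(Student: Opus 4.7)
The conjecture asserts that for each $t \ge 1$ there exists a graph $H$ satisfying $\eye(H) = t$ and $\dim(P_H) = 2t$, thereby showing the factor $2$ in Proposition~\ref{p:eye-dim} cannot be improved. My plan is to look for $H$ as the cover graph $G_P$ of a poset $P$ with $\dim(P) = t$. For any such $P$, any realizer $\{L_1, \ldots, L_t\}$ is automatically an eye-witness for $G_P$: for every cover edge $\{y, z\}$ (say $y < z$ in $P$) and every third vertex $x$, the covering property rules out $y < x < z$ in $P$, so $x$ is either above $z$, below $y$, or incomparable to at least one of $y, z$, and in each case some $L_i$ places $x$ outside the interval $[y, z]$. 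Hence $\eye(G_P) \le t$. Combined with $\dim(P_{G_P}) \le 2t$ from Lemma~\ref{l:referee}, the only remaining task is to prove the matching lower bound $\dim(P_{G_P}) \ge 2t$; Proposition~\ref{p:eye-dim} then forces $\eye(G_P) = t$ automatically.

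The cases $t \in \{1, 2\}$ are essentially covered by results in the paper. For $t = 1$, take $H_1$ to be a path on at least two vertices: $\eye(H_1) = 1$ via the natural ordering along the path, and $\dim(P_{H_1}) = 2$ because condition~(2) of Proposition~\ref{p:alternate} forces $\dim \ge 2$ while the incidence poset of a path is a two-dimensional fence. For $t = 2$, apply Theorem~\ref{t:main-2} with any fixed $g \ge 3$ and $r \ge 5$; the resulting cover graph $G$ has $\chi(G) \ge 5$ and is therefore non-planar, so Schnyder's Theorem~\ref{t:Schnyder} gives $\dim(P_G) \ge 4$, completing this case.

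For $t \ge 3$ the strategy is to build an explicit $t$-dimensional poset $P$ whose cover graph $G$ has enough incidence complexity to force $\dim(P_G) \ge 2t$. A natural construction is an iterated Ne\v{s}et\v{r}il-R\"{o}dl substitution in the style of the proof of Theorem~\ref{t:main-2}, but starting from a $t$-dimensional base poset (for instance the standard example $S_t$) and arranging the hypergraph-substitution step to preserve $\dim(P) = t$ while embedding many pairwise ``independent'' Schnyder-type obstructions in the cover graph. The main obstacle is proving the lower bound itself: for $t = 2$ Schnyder's planarity characterization supplies $\dim(P_G) \ge 4$ almost for free, but no analogous topological criterion is known for $t \ge 3$. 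One likely needs a Ramsey- or Erd\H{o}s-Szekeres-type argument tailored to condition~(1) of Proposition~\ref{p:alternate}: given any purported family of $2t - 1$ linear orders on $V(G)$, one must exhibit a cover edge $\{y, z\}$ and a third vertex $x$ such that no order places $x$ above both $y$ and $z$. Designing $P$ so that such an obstruction argument actually goes through is where I expect essentially all the difficulty to lie.
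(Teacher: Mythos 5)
This statement appears in the paper as an unresolved \emph{conjecture}, not a theorem: the authors explicitly state (about the closely related Conjecture~\ref{c:referee}) that they can settle it only for $t\le 2$ and ``have not been able to settle the issue for larger values of $t$.'' There is therefore no proof in the paper against which your argument can be checked.

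Your treatment of $t=1$ and $t=2$ is correct and is essentially the same observation the authors allude to. For $t=1$, the path works. For $t=2$, taking $G=G(g,r)$ from Theorem~\ref{t:main-2} with $r\ge 5$ gives $\eye(G)\le 2$; Proposition~\ref{p:eye-dim} then yields $\dim(P_G)\le 4$; $\chi(G)\ge 5$ forces non-planarity and Theorem~\ref{t:Schnyder} forces $\dim(P_G)\ge 4$; and Proposition~\ref{p:eye-dim} again pushes $\eye(G)$ up to exactly $2$. The preliminary reduction --- that any realizer of $P$ is automatically an eye-witness for the cover graph $G_P$, so one can hope to build $H$ as a cover graph of a low-dimensional poset and only needs a lower bound on $\dim(P_H)$ --- is a clean and correct observation, also implicit in the paper's derivation of Corollary~\ref{c:KriNes}.

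However, for $t\ge 3$ your proposal is not a proof: you sketch a construction (a $t$-dimensional base poset subjected to a Ne\v{s}et\v{r}il--R\"{o}dl-style substitution) and then candidly state that the crucial lower bound $\dim(P_H)\ge 2t$ is unproved and that ``essentially all the difficulty'' lies there. This is exactly the obstacle the paper's authors ran into: for $t=2$ the lower bound comes free from Schnyder's planarity criterion, and no analogous tool is available in higher dimension. So your write-up correctly diagnoses where the difficulty is but does not overcome it; as it stands it establishes only the cases $t\le 2$, which the paper already claims, and leaves the conjecture open for $t\ge 3$.
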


A similar analysis of Lemma~\ref{l:referee}
leads to the following conjectures.

\begin{conjecture}\label{c:referee}
For every $t\ge1$, there exists a poset $P$
so that if $G$ is the cover graph of $P$ and
$Q$ is the incidence poset of $G$, then
$\dim(Q)= 2\dim(P)$.
\end{conjecture}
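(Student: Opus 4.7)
The plan is to handle $t=1,2$ directly and to attempt a recursive amalgamation for $t\ge 3$ along the lines of the proof of Theorem~\ref{t:main-2}, the main obstacle being the matching lower bound $\dim(Q)\ge 2t$.

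For $t=1$, take $P$ to be the two-element chain $\{a<b\}$; then the cover graph $G$ is a single edge, its incidence poset $Q$ has three elements (two minima and a common maximum), and $\dim(Q)=2=2\dim(P)$. For $t=2$, apply Theorem~\ref{t:KriNes} with $r=5$ to obtain a poset $P$ with $\dim(P)\le 2$ whose cover graph $G$ has $\chi(G)=5$. Since $\chi(G)>4$, the Four-Color Theorem forces $G$ to be non-planar, and Schnyder's theorem (Theorem~\ref{t:Schnyder}) then gives $\dim(P_G)\ge 4$. Combined with Lemma~\ref{l:referee} this yields $\dim(P_G)=4$, and $\dim(P)=2$ follows because a one-dimensional $P$ would be a chain and would make $\chi(G)\le 2$.

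For general $t\ge 3$, I would attempt the following inductive step. Assume a poset $P'$ with $\dim(P')=t-1$ has been built whose cover graph $G'$ satisfies $\dim(P_{G'})=2(t-1)$. Choose a hypergraph $\cgH$ of very large chromatic number and sufficiently large girth and glue disjoint copies of $P'$, one for each edge of $\cgH$, beneath a fresh antichain identified with the vertex set of $\cgH$ via bijective covers, exactly as in Section~4. The aim is to establish two facts in parallel: first, that the realizer of $P'$ of size $t-1$ can be extended by one extra linear extension (encoding the top antichain together with the edge-ordering of $\cgH$) to a realizer of the new poset $P$, giving $\dim(P)\le t$; and second, that any hypothetical realizer of $P_G$ of size $2t-1$ would, by pigeonholing on the edges of $\cgH$, restrict to a realizer of size at most $2(t-1)-1$ on some embedded copy of $P_{G'}$, contradicting the inductive hypothesis.

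The main obstacle is this second, lower-bound step. For $t=2$ it is delivered for free by Schnyder's theorem, but for $t\ge 3$ no comparably clean structural tool is available: one must argue directly that saving even a single linear order in a realizer of $P_G$ forces a saving in the realizer of some embedded $P_{G'}$, and this requires the hypergraph glueing to be rigid enough to transmit the saving faithfully. A second delicate point is controlling $\dim(P)$ from above: the amalgamation used in the proof of Theorem~\ref{t:main-2} only controls $\eye(G)$, not $\dim(P)$, and as the authors themselves note, $\dim(P(g,r))$ grows rapidly with $r$. Modifying the construction so that $\dim(P)$ is pinned at exactly $t$ while $\dim(P_G)$ is pushed to exactly $2t$ is, to my mind, the principal difficulty that keeps the conjecture open.
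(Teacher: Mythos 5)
The statement you are asked to prove is Conjecture~\ref{c:referee}, not a theorem: the paper offers no proof and explicitly states that it ``holds when $t\le 2$, but we have not been able to settle the issue for larger values of $t$.'' So there is no paper proof to compare against, and your honest assessment that the general case remains open is exactly the correct conclusion.

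Your verification for $t=1$ and $t=2$ is correct. For $t=1$ the two-element chain works. For $t=2$ your route (Theorem~\ref{t:KriNes} with $r=5$, then the Four-Color Theorem to force non-planarity, then Schnyder's theorem for the lower bound $\dim(P_G)\ge 4$, then Lemma~\ref{l:referee} for the upper bound) is valid, but is heavier than necessary. A cleaner witness for $t=2$: let $P$ consist of a three-element antichain $\{a_1,a_2,a_3\}$ lying entirely below a second three-element antichain $\{b_1,b_2,b_3\}$ with every $a_i < b_j$. Then $\dim(P)=2$, the cover graph $G$ is $K_{3,3}$, which is non-planar, so Schnyder gives $\dim(P_G)\ge 4$, while Lemma~\ref{l:referee} gives $\dim(P_G)\le 4$. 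This avoids both the Four-Color Theorem and the machinery of \KriNes, and is almost certainly what the authors had in mind when calling the $t\le 2$ case clear.

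For $t\ge 3$ you correctly identify the two obstructions that make the conjecture hard: the amalgamation of Section~4 controls $\eye$ but emphatically not $\dim(P)$ (the authors themselves note $\dim(P(g,r))$ grows rapidly), and more fundamentally there is no analogue of Schnyder's theorem to certify lower bounds $\dim(P_G)\ge 2t$ for $t\ge 3$. Your proposed pigeonholing argument for transmitting a ``saved'' linear order down to an embedded copy of $P_{G'}$ does not obviously go through: a realizer of $P_G$ of size $2t-1$, restricted to the ground set of one $P_{G'}$, always yields a realizer of $P_{G'}$ of size $2t-1\ge 2(t-1)$, so no contradiction is produced without a substantially finer counting argument. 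That is precisely the kind of missing lower-bound technology the authors flag, and your writeup correctly diagnoses the problem rather than papering over it.
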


Clearly, Conjecture~\ref{c:referee} holds when $t\le2$, but we
have not been able to settle the issue for larger values of $t$.
However, our preliminary thoughts on this
conjecture suggest a more extensive line of research.  
For a poset $P$, we call a family $\cgR=\{L_1,L_2,\dots,L_t\}$
of linear extensions of $P$ an \textit{upper-cover
realizer of $P$} provided that whenever $(z,x,y)$ is
an ordered triple of distinct points in $P$ with $z$ covering
both $x$ and $y$, there is some $i$ with
$x>y$ in $L_i$.  The \textit{upper cover dimension} of $P$,
denoted $\dim_{uc}(P)$, would then be the minimum size
of an upper-cover realizer of $P$.

Lower-cover realizers and the lower cover dimension of $P$, denoted
$\dim_{lc}(P)$, would then be defined dually. 
Clearly, $\dim_{uc}(P)\le \dim(P)$ and
$\dim_{lc}(P)\le \dim(P)$.

An attractive feature of these new parameters is that they are monotonic
on subdiagrams of the order diagram of $P$, i.e., if we consider the
diagram $D$ of $P$ as an acyclic orientaton of the cover graph $G$, 
and $D'$ is a subdiagram of $D$, then $D'$ determines a poset $P'$ which is 
a suborder of $P$.  On the one
hand, $P'$ is not necessarily a subposet of $P$ and it is clear that
$\dim(P')$ may be the same as $\dim(P)$ or arbitrarily smaller or
larger.  On the other hand, $\dim_{uc}(P')\le\dim_{uc}(P)$ and
$\dim_{lc}(P')\le\dim_{lc}(P)$.

We make the following conjecture, which is easily seen to be
stronger than Conjecture~\ref{c:referee}.

\begin{conjecture}\label{c:ramsey}
For every pair $(d,r)$ of positive integers, there is a poset
$P$ with $\dim(P)=d$ so that if $D$ is the order diagram of
$P$, $E$ is the edge set of $D$ and $\phi:E\rightarrow\{1,2,
\dots,r\}$ is an $r$ coloring of the edges of $D$, then there
is some $\alpha\in\{1,2,\dots,r\}$ so that if we take
$D'$ as the subdiagram of $D$ with edge set
$\{e\in E:\phi(e)=\alpha\}$ and set $P'$ as the suborder of $P$
determined by $D'$, then
$\dim_{uc}(D')=\dim_{lc}(D')=\dim(P)$.
\end{conjecture}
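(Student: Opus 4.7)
The plan is to combine two ingredients: an \emph{irreducible gadget} poset $P_0=P_0(d)$ with $\dim(P_0)=\dim_{uc}(P_0)=\dim_{lc}(P_0)=d$; and a Ramsey amplification in the style of \NesRod\ which, starting from $P_0$, builds a larger poset $P$ with $\dim(P)=d$ whose order diagram $D(P)$ is ``Ramsey for subdiagrams containing $D(P_0)$''. Observe that the case $r=1$ of the conjecture already forces $P$ itself to satisfy $\dim_{uc}(P)=\dim_{lc}(P)=\dim(P)=d$ (take $D'=D$, $P'=P$), so producing such a $P_0$ is unavoidable in any approach.

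For the gadget step, the base case $d=2$ is handled by the four-element fence $\{a_1,a_2,b_1,b_2\}$ with covering relations $a_1\prec b_1$, $a_1\prec b_2$, $a_2\prec b_2$: since $b_2$ covers both $a_1$ and $a_2$, any upper-cover realizer must reverse the pair $(a_1,a_2)$, forcing $\dim_{uc}\ge 2$, and dually $\dim_{lc}\ge 2$, with matching upper bounds from $\dim=2$. For general $d$ I would search for a self-dual layered construction of height $d+1$ in which both the upper-cover graph and the lower-cover graph of $P_0$ carry enough forced critical pairs to push dimension to $d$. Standard examples $S_d$, generalized crowns, and Boolean lattices all appear to collapse to $\dim_{uc}=\dim_{lc}=2$, since two well-chosen opposite linear extensions already reverse the relevant antichain pairs; so a fundamentally new gadget is required. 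A promising route is a ``doubled'' standard example whose upper and lower layers interlock along a self-dual alternating-cycle pattern, arranged so that no $d-1$ linear extensions can simultaneously reverse all upper-cover pairs, nor all lower-cover pairs.

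Given $P_0$, the amplification step mirrors the construction of Theorem~\ref{t:main-2}. For each $r$ one chooses an $n$-uniform hypergraph $\cgH$ with $n=|P_0|$ whose chromatic number is large enough for a \NesRod\ edge-partition argument, then assembles $P$ by taking one disjoint copy of $P_0$ per hyperedge of $\cgH$ and amalgamating through a common new layer indexed by the vertex set of $\cgH$, using the same bijective cover pattern employed in Section~4. The crucial additional requirement is to maintain $\dim(P)=d$, which I would enforce by constructing an explicit $d$-element realizer $\{M_1,\dots,M_d\}$ of $P$ in layer-by-layer fashion from a fixed $d$-realizer $\{L_1,\dots,L_d\}$ of $P_0$, in direct analogy to how $M_1,M_2$ are built from $L_1,L_2$ in the proof of Theorem~\ref{t:main-2}. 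Once such a $P$ is available, any $r$-coloring of the cover edges of $D(P)$ yields, by pigeonhole on $\cgH$, at least one hyperedge on which the coloring is monochromatic on the entire copy of $D(P_0)$. Monotonicity of $\dim_{uc}$ and $\dim_{lc}$ on subdiagrams then gives
\[
d=\dim_{uc}(D(P_0))\le \dim_{uc}(D')\le \dim_{uc}(D(P))\le \dim(P)=d,
\]
together with the analogous chain for $\dim_{lc}$, producing the required equalities.

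The principal obstacle is the gadget step: inflating $\dim_{uc}$ alone by enriching the upper-cover structure is easy, but the typical cost is a flattened lower-cover structure with $\dim_{lc}\le 2$, and symmetrically. A genuinely self-dual gadget that simultaneously forces both parameters to $d$ is the delicate combinatorial core of the conjecture, and no explicit family achieving this for $d\ge 3$ is known to me. A secondary obstacle is carrying out the \NesRod\ amplification at the level of Hasse diagrams while preserving $\dim(P)=d$; this asks for a partite-amalgamation framework in the category of finite order diagrams that refines the eye-preserving construction of Theorem~\ref{t:main-2}.
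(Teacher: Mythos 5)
This statement is an open \emph{conjecture} in the paper; the authors supply no proof (indeed, they state explicitly that they cannot settle even the weaker Conjecture~\ref{c:referee} for $t\ge 3$, and they note that Conjecture~\ref{c:ramsey} is stronger). So there is nothing in the paper to compare your argument against, and your proposal cannot be a reconstruction of a known proof. It must be judged as an attempted new proof, and as such it has two genuine gaps.

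The first gap you acknowledge yourself: the entire plan rests on a gadget $P_0$ with $\dim_{uc}(P_0)=\dim_{lc}(P_0)=\dim(P_0)=d$, and you have no candidate for $d\ge 3$. Your $d=2$ fence is correct, but you correctly observe that the standard examples, crowns, and Boolean lattices all collapse to $\dim_{uc},\dim_{lc}\le 2$, and you offer only a heuristic (``doubled standard example with interlocking self-dual alternating cycles'') with no construction and no lower-bound argument. Since the $r=1$ case of the conjecture is literally the assertion that such a $P_0$ exists, this is not a peripheral difficulty; it is the whole conjecture in its simplest instance. The second gap is in the amplification step, where the logic is actually wrong as stated. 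The construction in the proof of Theorem~\ref{t:main-2} exploits the fact that $\cgH$ has high \emph{vertex} chromatic number: a proper $r$-coloring of the vertices of the cover graph fails on some hyperedge, which is a one-line pigeonhole. But Conjecture~\ref{c:ramsey} concerns an $r$-coloring of the \emph{edges} of the diagram, and you claim ``by pigeonhole on $\cgH$'' there is a hyperedge on which the coloring is monochromatic on an entire copy of $D(P_0)$. Pigeonhole gives no such thing: a copy of $D(P_0)$ has many edges, and an adversary can color each copy with a different non-constant pattern while still using only $r$ colors overall. To force a monochromatic copy you need a genuine structural edge-Ramsey theorem for Hasse diagrams (in the spirit of the Ne\v{s}et\v{r}il--R\"{o}dl partite amalgamation method), not a chromatic-number pigeonhole, and one would then still have to verify that the amalgamated poset has dimension exactly $d$ -- a constraint the usual Ramsey amplifications do not respect. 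Finally, even with a correct Ramsey step, note that you need $\dim_{uc}(D')=d$ and $\dim_{lc}(D')=d$ to hold for the \emph{same} color class $\alpha$; your chain of inequalities treats the two parameters separately and would need an extra argument to synchronize them.

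\end{document}
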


\section{Acknowledgement}

The authors would very much like to thank Penny Haxell for raising the
question answered with Theorem~\ref{t:main-1} in discussions at the 
Graph Theory Conference held in May 2012 at Georgia Tech.
We would also like to express our appreciation to two helpful
referees for their insightful comments and suggestions.  We especially
appreciate the new line of research opened up by the observation
made by one of the referees reflected in Lemma~\ref{l:referee}
and Conjectures~\ref{c:referee} and~\ref{c:ramsey}.

\end{document}